\documentclass[
	11pt,oneside,english,a4paper]{amsart}
\usepackage[T1]{fontenc}
\usepackage[utf8]{inputenc}
\usepackage[dvipsnames]{xcolor}
\usepackage[yyyymmdd,hhmmss]{datetime}
\usepackage[numbers,sort,compress]{natbib}
\usepackage{amstext,amsthm,amssymb}
\usepackage{xparse,mathrsfs,mathtools}
\usepackage[english]{babel}
\usepackage[pdftex,unicode=true,%
	pdfusetitle,pdfdisplaydoctitle=true,%
	pdfpagemode=UseOutlines,%
	bookmarks=true,bookmarksnumbered=false,bookmarksopen=true,bookmarksopenlevel=2,%
	breaklinks=true,%
	pdfencoding=unicode,psdextra,
	pdfcreator={},
	pdfborder={0 0 0},
	colorlinks=false
]{hyperref}
\usepackage{lmodern,microtype}
\usepackage[a4paper]{geometry}
\usepackage[capitalise,nameinlink]{cleveref}
\usepackage[cal=zapfc,bb=ams,frak=esstix,scr=boondoxo]{mathalfa}

\hypersetup{
	colorlinks=false,
	pdfborder={0 0 0},
	pdfborderstyle={/S/U/W 0},
}


\newtheorem*{thm*}{Theorem}
\newtheorem{thm}{Theorem}[section]
\newtheorem{lem}[thm]{Lemma}
\newtheorem{cor}[thm]{Corollary}
\newtheorem{prop}[thm]{Proposition}
\theoremstyle{remark}
\newtheorem{rem}[thm]{Remark}
\newtheorem*{rem*}{Remark}

\newtheorem*{example*}{Example}
\newtheorem*{examples*}{Examples}
\newtheorem{question}[thm]{Question}

\newcommand{\NN}{\mathbb{N}}
\newcommand{\ZZ}{\mathbb{Z}}
\newcommand{\QQ}{\mathbb{Q}}
\newcommand{\RR}{\mathbb{R}}
\newcommand{\CC}{\mathbb{C}}
\newcommand{\FF}{\mathbb{F}}
\newcommand{\integers}{\mathscr{O}}
\DeclareMathOperator{\Split}{Split}
\DeclareMathOperator{\Gal}{Gal}

\DeclarePairedDelimiter\parentheses{\lparen}{\rparen}
\DeclarePairedDelimiter\braces{\lbrace}{\rbrace}
\DeclarePairedDelimiter\abs{\lvert}{\rvert}
\NewDocumentCommand\set{ s o m o }{%
	\IfBooleanTF{#1}{\IfNoValueTF{#4}{\braces*{#3}}{\braces*{\,#3:#4\,}}}{%
	\IfNoValueTF{#2}{\IfNoValueTF{#4}{\braces{#3}}{\braces{\,#3:#4\,}}}{%
	\IfNoValueTF{#4}{\braces[#2]{#3}}{\braces[#2]{\,#3:#4\,}}}}%
}

\crefname{section}{§}{§§}
\crefformat{equation}{#2(#1)#3}
\crefformat{enumi}{#2(#1)#3}
\numberwithin{equation}{section}
\multlinegap=0pt

\makeatletter
\DeclareRobustCommand{\thmprime}{\begingroup%
	\expandafter\in@\expandafter b\expandafter{\f@series}%
	\ifin@ \boldmath \fi%
	\(\m@th{}{\raisebox{3pt}{\(\scriptstyle\prime\)}}\)%
\endgroup}
\makeatother
\NewDocumentEnvironment{manual}{O{thm}m}{%
	\addtocounter{thm}{-1}%
	\begin{#1}%
}{\end{#1}}

\usepackage{xifthen,ifthen}
\makeatletter
\newcounter{@ToDo}
\newcommand{\todo@helper}[1]{%
	({\color{blue}TODO~\arabic{@ToDo}: {#1\@addpunct{.}}})%
}
\newcommand{\todo}[1]{\stepcounter{@ToDo}%
	\relax\ifmmode\text{\todo@helper{#1}}%
	\else\todo@helper{#1}\fi%
}
\makeatother

\title{On polynomials with roots modulo almost all primes}
\date{\today{}}

\makeatletter\@namedef{subjclassname@2020}{\textup{2020} Mathematics Subject Classification}\makeatother
\subjclass[2020]{
	Primary
	11R09; 
	Secondary
	11R32. 
}
\keywords{Polynomial, root, reduction modulo~$p$, Galois group.}

\author{Christian~Elsholtz}

\author{Benjamin~Klahn}

\author{Marc~Technau}

\address{
	Christian~Elsholtz \and Benjamin~Klahn \and Marc~Technau\\%
	Institut für Analysis und Zahlentheorie\\%
	TU~Graz\\%
	Kopernikusgasse~24/II\\%
	8010~Graz\\%
	Austria}
\email{%
	elsholtz@math.tugraz.at\textnormal{, }%
	klahn@math.tugraz.at\textnormal{, }%
	mtechnau@math.tugraz.at%
}

\begin{document}
\begin{abstract}
    Call a monic integer polynomial \emph{exceptional} if it has a root modulo all but a finite number of primes, but does not have an integer root.
    We classify all irreducible monic integer polynomials $h$ for which there is an irreducible monic \emph{quadratic} $g$ such that the product $gh$ is exceptional.
    We construct exceptional polynomials with all factors of the form $X^{p}-b$, $p$ prime and $b$ square free.
\end{abstract}
\maketitle

\section{Introduction}

\subsection{Earlier work on intersective polynomials}
The use of local methods in algebraic number theory and arithmetic geometry is well established.
For instance, a standard approach towards investigating the solubility of a given Diophantine equation in some number field proceeds by first studying solubility in all completions.
In certain good situations, local solutions can be `patched together' to yield global solubility.
The availability of such local--global principles (or `Hasse principles') or the lack/failure thereof is a major topic of on-going research in the area.

The object of the present investigation has a somewhat different flavour.
We start by pointing towards some related items in the literature.
In~\cite{Bilu}, motivated by earlier investigations regarding finiteness results for the number of solutions of the Diophantine equation $f(x) = n!$ (with $f\in\ZZ[X]$ a polynomial of degree exceeding~one and $n$ some fixed positive integer), Berend and Bilu studied the following problem:
\begin{question}\label{q:BerendBilu}
	Given a polynomial $f\in\ZZ[X]$, decide whether or not for \emph{every} integer~$m$ its reduction $(f\bmod m) \in (\ZZ/m\ZZ)[X]$ admits a root.
\end{question}
Equivalently, one may ask if $f$ admits a solution in the $p$-adic integers for every rational prime~$p$.
By Hensel's lemma, the latter essentially boils down to $f\bmod p$ admitting a root and $f\bmod\Delta$ admitting a root, where $\Delta$ is some integer related to the primes $p$ for which $f\bmod p$ fails to be separable; this integer may be computed effectively from the discriminants of the irreducible factors of $f$ (see~\cite{Bilu} for the details).

If $f\in\ZZ[X]$ has a root in the integers, then $f$ trivially satisfies the property expounded in \cref{q:BerendBilu}.
On the other hand, the converse does not hold in general (see, e.g., \cite[Example~1]{Bilu}).
However, if a monic polynomial $f$ has degree at most~$4$ and has a root modulo all primes, then $f$ does in fact have a root in the integers~\cite[Remark~2]{Bilu}.
Polynomials $f\in\ZZ[X]$ without a root in the integers, yet satisfying the property in \cref{q:BerendBilu} have been called \emph{intersective} in the literature.

\medskip

We mention some Galois-theoretic results on such polynomials, noting that the connection to Galois theory shall prove fruitful in~\cref{sec:CharacterisingExceptionality} below.
In~\cite{Sonn1}, Sonn showed that any finite non-cyclic solvable group can be realized as the Galois group of an intersective polynomial.
Furthermore, in~\cite{Sonn}, Sonn showed that, for non-cyclic, non-solvable groups, being realizable as a Galois group is equivalent to being realizable as the Galois group of an intersective polynomial.
In~\cite{Koenig}, König constructed non-solvable groups that can be realized as the Galois group of an intersective polynomial.

Finally, we point out that intersective polynomials have appeared in the literature with regard to other questions whose proofs are of a rather more analytic nature, involving tools from Fourier analysis and ergodic theory.
We mention only a select few results.
For instance, Kamae and Mendes France~\cite{Kamae} have shown that the polynomials $f\in\ZZ[X]$ for which the value set $\set{ \abs{f(n)} }[ n\in\ZZ ] \setminus\set{0}$ is a van~der Corput set are precisely the intersective ones; here a subset $\mathscr{H}$ of positive integers is called a \emph{van~der Corput set} if for any sequence $(u_n)_n$ on $\RR/\ZZ$ uniform distribution of every differenced sequence $(u_{n+h}-u_n)_n$, $h\in\mathscr{H}$, implies uniform distribution of $(u_n)_n$.
In additive number theory, a well-known result of Furstenberg--S\'{a}rk\H{o}zy~\cite{Furstenberg,Sarkozy} asserts that a subset $A$ of positive integers such that the difference set $A-A = \set{ a-a' }[ a,a'\in A ]$ contains no non-zero squares must be small in a suitable quantitative sense.
This generalises to sets $A\subseteq\NN$ avoiding value sets of intersective polynomials.
The interested reader is referred to~\cite{Rice}.
In a similar direction, Bergelson, Leibman and Lesigne~\cite{Bergelson} have obtained a generalisation of van der Waerden's well-known theorem in Ramsey theory, involving finite families of `jointly intersective' integer-valued polynomials with rational coefficients.

\subsection{Exceptional polynomials}
For our investigation we relax the condition of having a root modulo every $m$ slightly.
The following may serve as an initial guiding question:
\begin{manual}[question]{\ref{q:BerendBilu}\thmprime}\label{q:BerendBilu'}
	Given a polynomial $f\in\ZZ[X]$, decide whether or not $f\bmod p$ admits a root for all but at most finitely many primes~$p$.
\end{manual}
We remark here that, as already pointed out in~\cite{Bilu}, the algorithmic aspect of such questions is (at least in principle) settled by work of J.\ Ax.
It transpires that \cref{q:BerendBilu} is equivalent to \cref{q:BerendBilu'} and checking whether $f\bmod\Delta$ admits a root, where $\Delta$ is the integer mentioned above.
For simplicity, we shall restrict our attention to \emph{monic} polynomials $f$ in the sequel.
Let $\mathscr{P}(f)$ denote the set of rational primes $p$ for which $f\bmod p$ admits a root.
It is elementary to show that $\mathscr{P}(f)$ is an infinite set and the density theorem of Chebotarëv even shows that $\mathscr{P}(f)$ has positive natural density in the set of primes (see \cite[Theorem~2]{Bilu} or \cref{thm:BerendBilu} below).
Clearly, if $f$ admits a root in the integers, then $f\bmod m$ trivially has a solution modulo every integer~$m$.
Hence, we shall call a monic polynomial $f\in\ZZ[X]$ \emph{exceptional} if $f$ has no root in the integers and $\mathscr{P}(f)$ contains all but at most finitely many primes. For instance, both of the following polynomials are exceptional:
\begin{equation}\label{examplePolys}
	g = (X^2 - 2) (X^2 - 3) (X^2 - 6)
		\quad\text{and}\quad
	h = (X^2 + 108) (X^3 + 2).
\end{equation}
Incidentally, neither of these two polynomials is intersective as can be seen, for instance, upon reducing them modulo $2^6$.
That $g$ is exceptional can be verified via \cref{exceptionalPower} below.
That $h$ is exceptional is a folklore exercise in Galois theory using that $-108$ is the discriminant of $X^3 + 2$, thus identifying $X^2 + 108$ as the quadratic resolvent of $X^3 + 2$.

\subsection{Plan of the paper}

In this paper we do the following.
\begin{enumerate}
	\item We adapt a Galois-theoretic answer of \cite{Bilu} to \cref{q:BerendBilu} to our \cref{q:BerendBilu'} (\cref{sec:CharacterisingExceptionality}).
	\item
	We outline how the above can be used to construct exceptional polynomials with few irreducible factors (\cref{sec:ConstructingExceptionalPolynomials:ViaGaloisTheory}).
	\item We completely classify all irreducible monic polynomials $h\in\ZZ[X]$ for which there exists an irreducible monic quadratic polynomial $g\in\ZZ[X]$ such that their product $gh$ is exceptional (\cref{sec:CharacterisationWithQuadraticFactor}).
	\item We construct exceptional polynomials as products of factors $X^p - b$ (\cref{sec:BuildingWithCyclotomicFactors}).
\end{enumerate}
Most of the proofs are postponed until~\cref{sec:Proofs}.

\subsection{Acknowledgements}

The authors would like to thank Yuri Bilu, Joachim König and Marc Munsch for sharing some helpful comments on an earlier draft of this manuscript.
Further thanks are due to the anonymous referee for valuable feedback.

The first-named author was supported by a joint FWF--ANR project \emph{`ArithRand'} (FWF~I 4945-N and ANR-20-CE91-0006).
The first- and second-named authors acknowledge support of the \emph{Austrian Science Fund (FWF)}, project number~W1230.

\section{Characterising exceptionality}
\label{sec:CharacterisingExceptionality}

The aim of this section is to give a Galois-theoretic answer \cref{q:BerendBilu'}.
Most of what we do in this section is not entirely new and can be found (implicitly or explicitly) in~\cite{Bilu}.

Fix a monic polynomial $f\in\ZZ[X]$ of degree $n$.
As $f$ is exceptional if and only if the product of the prime divisors of $f$ is exceptional, we may suppose throughout the rest of the paper that $f$ is square free.
Let $L = \Split(f, \QQ) \subset \CC$ denote the splitting field of $f$ (embedded into~$\CC$) and $G = \Gal(L/\QQ)$ its Galois group.
Fixing an enumeration of the roots $\alpha_1,\alpha_2,\dots,\alpha_n$ of $f$, the faithful action of $G$ on $\set{ \alpha_1, \alpha_2, \dots, \alpha_n }$ induces an embedding
\(
    G \hookrightarrow S_n
\)
by means of which we shall regard $G$ as a subgroup of $S_n$ in the sequel.
Moreover, for $i=1,2,\dots, n$, let
\begin{equation}\label{eq:Hi:Def}
	H_i = \Gal(L/\QQ(\alpha_i)) \leqslant G.
\end{equation}

In the following we briefly review some well-known results from algebraic number theory for the reader's convenience.
Proofs may be found in standard texts such as~\cite{Neukirch} or~\cite{Narkiewicz}.
For a rational prime $p$ we say that $f$ has \emph{factorisation pattern} $n_1 \leq n_2 \leq \ldots \leq n_t$ modulo~$p$ if the reduction $f \bmod p$ factors as
\[
	(f \bmod p) = f_1 f_2 \cdots f_{t},
\]
where the $f_i$ are irreducible polynomials in $\FF_p[X]$ and the $n_i$ denote their respective degrees.
Suppose now that $p$ is a rational prime, unramified in~$L$, with corresponding factorisation
\[
	p \mkern 1mu \integers_L = \mathfrak{p}_1 \mathfrak{p}_2 \cdots \mathfrak{p}_{k}
\]
into prime ideals of the ring $\integers_L$ of algebraic integers of $L$.
Then, for every $i=1,\ldots,k$, there exists a unique element $\sigma_{\mathfrak{p}_i} \in G$ such that for every $x\in \integers_L$ we have
\[
	\sigma_{\mathfrak{p}_i}(x) \equiv x^p \mod{ \mathfrak{p}_i}.
\]
The automorphism $\sigma_{\mathfrak{p}_i}$ is called the \emph{Frobenius element at $\mathfrak{p}_i$} and it has the property that, when considered as an element of $S_n$, its cycle type equals the factorisation pattern of $f \bmod p$.
The set
\(
	C_p = \set{ \sigma_{\mathfrak{p}_i} }[
		\mathfrak{p}_i ~ \text{lying above} ~ p \mkern 1mu \integers_L
	]
\)
constitutes a conjugacy class of $G$.
Chebotarëv's density theorem determines the natural density of primes $p$ for which a given conjugacy class $C$ of $G$ occurs as $C_p$:
\begin{thm*}[Chebotarëv's density theorem over $\QQ$]
	Let $K/\QQ$ be a Galois extension and let $C$ be a conjugacy class of $G=\Gal(K/\QQ)$.
	Then the natural density of rational primes $p$ for which $C_p = C$ exists, and equals $\abs{C}/\abs{G}$.
\end{thm*}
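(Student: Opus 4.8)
The plan is to follow the classical three-stage reduction underlying Chebotarëv's theorem: dispose of cyclotomic extensions first, then cyclic extensions of a general number field, and only then the general case; the analytic heart of the matter is the non-vanishing of the relevant $L$-functions on the line $\Re s = 1$, and everything else is algebra and combinatorics. It is cleanest to prove the statement first with \emph{Dirichlet density} in place of natural density and then upgrade, since the two notions agree whenever the relevant Dirichlet series has a logarithmic singularity at $s=1$ and one is willing to invoke a Tauberian theorem. Throughout one works with the prime series $\sum_{\mathfrak{p}} (\mathrm{N}\mathfrak{p})^{-s}$ attached to a number field and compares it with $\log\zeta(s) = \sum_{\mathfrak{p},m} (m\,(\mathrm{N}\mathfrak{p})^{ms})^{-1}$, whose behaviour near $s=1$ is controlled by the simple pole of the Dedekind zeta function.

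For $K = \QQ(\zeta_m)$ one has $\Gal(K/\QQ) \cong (\ZZ/m\ZZ)^{\times}$ and the class $C_p$ is determined by $p \bmod m$, so the assertion is precisely Dirichlet's theorem on primes in arithmetic progressions: expand $\log L(s,\chi)$ over the Dirichlet characters mod $m$, use orthogonality to isolate a residue class, and invoke $L(1,\chi)\ne 0$ for $\chi$ non-principal. For a cyclic extension $K/F$ of number fields one runs the same character computation with the Hecke characters of $F$ corresponding via class field theory to $\Gal(K/F)^{\vee}$, the needed input being $L(1,\chi)\ne 0$ for $\chi$ non-trivial; alternatively one reduces to the cyclotomic case by adjoining enough roots of unity and tracking how Frobenius classes and densities transform under restriction of primes.

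For an arbitrary Galois group $G = \Gal(K/\QQ)$ and a class $C$, pick $\sigma \in C$, set $H = \langle\sigma\rangle$ and let $E = K^{H}$, so that $K/E$ is cyclic of degree $\abs{H}$. The cyclic case gives that the primes $\mathfrak{q}$ of $E$ with $K/E$-Frobenius equal to $\sigma$ have density $1/\abs{H}$ among the primes of $E$. Pushing this down to $\QQ$ — only primes $\mathfrak{q}$ of residue degree one over $\QQ$ contribute to the weighted count, and a short orbit-counting argument on the coset space $G/H$ (equivalently, a centralizer computation relating $\abs{G}$, $\abs{H}$ and $\abs{C}$) pins down the number of such $\mathfrak{q}$ lying above a given $p$ with $\sigma_p\in C$ — converts density $1/\abs{H}$ upstairs into density $\abs{C}/\abs{G}$ for the rational primes with $C_p = C$. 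Finally, the passage from Dirichlet to natural density follows from a Tauberian argument (Wiener--Ikehara), since the Dirichlet series of the primes in question equals $\tfrac{\abs{C}}{\abs{G}}\log\tfrac{1}{s-1}$ plus a function holomorphic across $s=1$.

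The main obstacle is the analytic input $L(1,\chi)\ne 0$ for non-trivial characters. Over $\QQ$ this is Dirichlet's classical argument, with the well-known delicate point at real characters handled via the analytic class number formula (or Landau's device of studying $\zeta_{K}(s)$ for $K$ the associated quadratic field); over a general base field it requires the meromorphic continuation and functional equation of Hecke $L$-functions, which is precisely what one imports wholesale from standard references such as \cite{Neukirch}. By comparison, the orbit-counting reduction in the last stage and the Tauberian upgrade are entirely routine.
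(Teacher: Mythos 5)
The paper does not prove this statement at all: Chebotar\"ev's density theorem is quoted as a known classical result, with the reader referred to standard texts such as \cite{Neukirch} or \cite{Narkiewicz} for proofs. Your outline is precisely the standard textbook argument found in those references (reduction to the cyclotomic/cyclic case, Deuring's push-down via $E=K^{\langle\sigma\rangle}$ with the centralizer count $\abs{Z_G(\sigma)}/\abs{H}=\abs{G}/(\abs{C}\abs{H})$ of degree-one primes, and a Tauberian upgrade from Dirichlet to natural density), so there is nothing to compare against within the paper itself. The only point worth tightening is the last step: Wiener--Ikehara needs holomorphy of the relevant Dirichlet series on the entire line $\Re s=1$ away from the pole, i.e.\ non-vanishing of the Hecke $L$-functions on all of $\Re s=1$ and not merely at $s=1$; this is standard but should be stated as the input being imported.
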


This was applied by Berend and Bilu~\cite{Bilu} to obtain the following result:
\begin{thm}\label{thm:BerendBilu}
	Keep the notation from above.
	Then the natural density $\delta(\mathscr{P}(f))$ of rational primes $p$ for which $f\bmod p$ has a root is
	\[
		\delta(\mathscr{P}(f))
		\coloneqq \lim_{N\to\infty} \frac{\abs{\set{ \text{primes } p\leq N }[ f\bmod p \text{~has a root} ]}}{\abs{\set{\text{primes } p\leq N}}}
		= \frac{1}{\abs{G}} \abs[\bigg]{\bigcup_{i=1}^n H_i},
	\]
	where the $H_i$ are given by~\cref{eq:Hi:Def}.
\end{thm}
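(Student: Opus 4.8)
\emph{Proof sketch.}
The plan is to rephrase the event ``$f\bmod p$ has a root'' in terms of Frobenius elements and then to invoke Chebotarëv's density theorem. First I would set aside the finitely many primes ramified in~$L$; since natural density is unaffected by removing a finite set of primes, this does not change $\delta(\mathscr{P}(f))$. For every remaining prime $p$ the Frobenius class $C_p\subseteq G$ is defined, and, by the facts recalled above, the cycle type in $S_n$ of any $\sigma\in C_p$ equals the factorisation pattern of $f\bmod p$.

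Next I would note that over the field $\FF_p$ the polynomial $f\bmod p$ has a root precisely when it has a linear irreducible factor, i.e.\ precisely when its factorisation pattern contains a part equal to~$1$, i.e.\ precisely when some (equivalently, every) $\sigma\in C_p$ has a fixed point when regarded as a permutation of $\set{1,\dots,n}$. Put $X = \set{ \sigma\in G }[ \sigma(i)=i \text{ for some } i\in\set{1,\dots,n} ]$. Whether a permutation has a fixed point depends only on its cycle type, so $X$ is closed under conjugation and is therefore a disjoint union of conjugacy classes $C_{1},\dots,C_{r}$ of~$G$. Applying Chebotarëv's density theorem to each $C_{j}$ and summing over the pairwise disjoint sets of primes involved, the density of primes $p$ — the ramified ones excluded — with $C_p\in\set{C_1,\dots,C_r}$ exists and equals $\sum_{j=1}^{r}\abs{C_j}/\abs{G} = \abs{X}/\abs{G}$. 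By the observation just made, this density is exactly $\delta(\mathscr{P}(f))$.

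It remains to identify $X$ with $\bigcup_{i=1}^{n} H_i$. Since the embedding $G\hookrightarrow S_n$ is induced by the action of $G$ on the roots $\alpha_1,\dots,\alpha_n$, the condition ``$\sigma$ fixes $i$'' is the same as ``$\sigma(\alpha_i)=\alpha_i$''; and $\sigma(\alpha_i)=\alpha_i$ holds if and only if $\sigma$ fixes $\QQ(\alpha_i)$ pointwise, that is, if and only if $\sigma\in\Gal(L/\QQ(\alpha_i)) = H_i$. Hence $X = \bigcup_{i=1}^{n} H_i$, and consequently $\delta(\mathscr{P}(f)) = \abs{\bigcup_{i=1}^{n} H_i}/\abs{G}$, as claimed. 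None of the individual steps is hard; the one point deserving a little care is the reduction to unramified primes keeping the cycle-type/factorisation-pattern dictionary valid, which is exactly where the running hypothesis that $f$ be square free enters, alongside the fact that discarding finitely many primes leaves the density unchanged.
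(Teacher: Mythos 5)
Your argument is correct and is precisely the one the paper sets up: the paper itself states this theorem without proof, citing Berend--Bilu, but the ingredients it recalls immediately beforehand (Frobenius cycle type equals the factorisation pattern for unramified primes, plus Chebotarëv) are exactly the ones you combine, together with the standard identification of ``$\sigma$ has a fixed point'' with $\sigma\in\bigcup_i H_i$. The only point worth flagging is the one you already flag yourself: besides the ramified primes one also discards the finitely many $p$ for which $f\bmod p$ fails to be separable, so that the cycle-type/factorisation-pattern dictionary applies; this does not affect the density.
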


Recall that only finitely many rational primes~$p$ ramify in~$L$.
Moreover, since, for unramified primes $p$, the Frobenius elements over $p$ have the same cycle type as the factorisation pattern of $f \bmod p$, we see that $f$ is exceptional if and only if it has no root in the integers and every element of $G$ has a fixed point.
The second part of the last statement is clearly equivalent to every element of $G$ fixing a root $\alpha_i$ of $f$.
We obtain the following result, which is a variant of a result obtained by Sonn~\cite{Sonn1} concerning intersective polynomials.
\begin{prop}\label{coverExcep}
	Keep the notation from above.
	Then $f$ is exceptional if and only if $f$ has no integer root and
	\begin{equation}\label{eq:GaloisGroupAsUnion}
		G = \bigcup_{i=1}^n H_i.
	\end{equation}
\end{prop}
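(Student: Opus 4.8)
The plan is to obtain the claim directly from \cref{thm:BerendBilu} together with the observations recorded immediately before the statement; the only thing really left to do is to translate the group-theoretic identity \cref{eq:GaloisGroupAsUnion} into the arithmetic condition defining exceptionality, so I would build the argument around that translation.

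The key observation is purely group-theoretic: since $H_i = \Gal(L/\QQ(\alpha_i))$ consists precisely of those $\sigma \in G$ fixing $\QQ(\alpha_i)$ pointwise, which is the same as fixing $\alpha_i$, an element $\sigma \in G$ lies in $\bigcup_{i=1}^{n} H_i$ if and only if $\sigma$ fixes at least one root $\alpha_i$, i.e.\ if and only if $\sigma$, viewed as an element of $S_n$, has a fixed point (equivalently, its cycle type contains a part equal to~$1$). Consequently, \cref{eq:GaloisGroupAsUnion} holds if and only if every element of $G$ has a fixed point; and, this property being conjugation-invariant, it suffices to test it on a single representative of each conjugacy class of $G$.

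For the forward direction, assume $f$ is exceptional. Then by definition $f$ has no integer root and $\mathscr{P}(f)$ misses only finitely many primes, so $\delta(\mathscr{P}(f)) = 1$; plugging this into the formula of \cref{thm:BerendBilu} gives $\abs{\bigcup_{i=1}^{n} H_i} = \abs{G}$, and since $\bigcup_i H_i$ is a subset of the finite group $G$ this yields \cref{eq:GaloisGroupAsUnion}. One could equally avoid \cref{thm:BerendBilu} and argue directly: given $\sigma \in G$, Chebotarëv's theorem provides an unramified prime $p$ with $\sigma \in C_p$ and $p \in \mathscr{P}(f)$; then the Frobenius over $p$ has the same cycle type as the factorisation pattern of $f \bmod p$, hence a fixed point, hence so does $\sigma$.

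For the converse, suppose $f$ has no integer root and that \cref{eq:GaloisGroupAsUnion} holds; it remains to show $\mathscr{P}(f)$ contains all but finitely many primes. Discard the finitely many primes ramifying in~$L$. For every remaining prime $p$, the cycle type of a Frobenius element $\sigma_{\mathfrak{p}}$ over $p$ equals the factorisation pattern of $f \bmod p$; by the translation above $\sigma_{\mathfrak{p}}$ has a fixed point, so $f \bmod p$ has an irreducible factor of degree~$1$, i.e.\ a root in $\FF_p$. Hence $p \in \mathscr{P}(f)$ for every unramified $p$, and $f$ is exceptional. I do not anticipate a serious obstacle here: the whole proof amounts to unwinding definitions, the only point requiring a modicum of care being the bookkeeping of the finitely many ramified primes, which are harmlessly absorbed into the finite exceptional set permitted in the definition of an exceptional polynomial.
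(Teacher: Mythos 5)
Your proof is correct and follows essentially the same route as the paper, which derives \cref{coverExcep} in the prose preceding its statement: discard the finitely many ramified primes, use the equality of the Frobenius cycle type with the factorisation pattern of $f \bmod p$ together with Chebotarëv to translate ``root modulo almost all $p$'' into ``every element of $G$ has a fixed point'', and identify the elements fixing $\alpha_i$ with $H_i$. Your additional derivation of the forward direction from the density formula of \cref{thm:BerendBilu} is a harmless (and valid) shortcut, not a substantively different argument.
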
 

Note that, in particular, \cref{coverExcep} furnishes a means to decide whether some polynomial $f$ is exceptional in terms of its Galois group.
From this one easily deduces the next result which is also due to Sonn~\cite{Sonn1} in the case of intersective polynomials (see also~\cite{Brandl,Koenig}).
\begin{cor}\label{cor:ExceptionalPolynomialsAreReducible}
	All exceptional polynomials are reducible.
\end{cor}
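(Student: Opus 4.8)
The plan is to argue by contradiction using \cref{coverExcep}. Suppose $f$ is exceptional and irreducible. Then $f$ has no integer root, and by \cref{coverExcep} we have $G = \bigcup_{i=1}^n H_i$, where $H_i = \Gal(L/\QQ(\alpha_i))$. Since $f$ is irreducible, the Galois group $G$ acts transitively on the roots $\alpha_1,\dots,\alpha_n$, and consequently the subgroups $H_1,\dots,H_n$ are all conjugate to each other in $G$: indeed, if $g\in G$ satisfies $g(\alpha_1)=\alpha_i$, then $H_i = g H_1 g^{-1}$. So the displayed union is a union of conjugates of a single subgroup $H \coloneqq H_1$.

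The key step is then the classical group-theoretic fact that a finite group cannot be written as the union of the conjugates of a single proper subgroup. Concretely: if $H \lneq G$, then $\bigl\lvert \bigcup_{g\in G} gHg^{-1} \bigr\rvert < \lvert G\rvert$. The standard counting argument runs as follows. The number of distinct conjugates of $H$ is $[G : N_G(H)] \leq [G:H]$. Each conjugate has $\lvert H\rvert$ elements, and they all share the identity, so
\[
	\abs[\bigg]{\bigcup_{g\in G} gHg^{-1}}
	\leq [G:H]\bigl(\abs{H}-1\bigr) + 1
	= \abs{G} - [G:H] + 1
	< \abs{G},
\]
the last inequality using $[G:H]\geq 2$. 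Hence the conjugates of a proper subgroup cannot cover $G$, contradicting $G = \bigcup_{i=1}^n H_i$. Therefore, if $f$ is exceptional it cannot be irreducible; equivalently, every exceptional polynomial is reducible.

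I expect the main (and only real) obstacle to be a degenerate bookkeeping point: one must ensure $H$ is a \emph{proper} subgroup of $G$, i.e.\ that $\QQ(\alpha_1)\neq\QQ$, so that $n = [\QQ(\alpha_1):\QQ] > 1$. This is exactly where the hypothesis that $f$ has no integer root enters: an irreducible monic $f$ with a rational (hence integer) root would have degree $1$, giving $H = G$ and making the union trivially all of $G$. Since an exceptional polynomial has no integer root by definition, this case is excluded, and the counting inequality above applies with $[G:H]=n\geq 2$. No analytic input is needed here; \cref{coverExcep} has already done the number-theoretic work, and what remains is the elementary observation about covering a group by conjugates of one subgroup.
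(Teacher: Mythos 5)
Your proof is correct, and it reaches the contradiction by a slightly different classical fact than the paper does. The paper's proof also reduces to \cref{coverExcep} plus transitivity, but then quotes the orbit-counting corollary of Burnside's lemma (Jordan's theorem): in a non-trivial transitive group the average number of fixed points is $1$ while the identity already contributes $n>1$, so some element is fixed-point-free. You instead observe that transitivity makes the stabilisers $H_i$ pairwise conjugate and then invoke the counting bound $\abs{\bigcup_{g\in G} gHg^{-1}} \leq [G:H](\abs{H}-1)+1 < \abs{G}$ for a proper subgroup $H$. These are two standard proofs of the same underlying statement, so neither buys much over the other here; if anything, your version makes explicit where irreducibility enters (all $H_i$ are conjugate to a \emph{single} subgroup) and where the no-integer-root hypothesis enters (it forces $n\geq 2$, hence $[G:H_1]=n\geq 2$, so $H_1$ is proper), whereas the paper's version is shorter because it outsources both points to the quoted group-theoretic fact. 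Your handling of the degenerate case is exactly right and is the same bookkeeping the paper performs when it concludes that a trivial Galois group would force an integer root.
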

\begin{proof}
	Suppose that $f$ is exceptional and irreducible. Then the action of the Galois group of $f$ acts transitively on the roots of $f$. However, a consequence of Burnside's Lemma says that not all elements of a transitive group have a fixed point unless the group is trivial. Thus, by Theorem \cref{coverExcep} the Galois group of $f$ must be trivial, contradicting that $f$ does not have an integer root.
\end{proof}

It seems worth pointing out that \cref{cor:ExceptionalPolynomialsAreReducible} may be viewed as a `local to global'-type result: an exceptional polynomial has a linear factor modulo almost all primes $p$ (which, incidentally, lifts to a linear factor over the $p$-adic integers in all but at most finitely many cases) and is, consequently, reducible.
The conclusion is then that the exceptional polynomial is already reducible over $\QQ$.
Note that our definition of exceptionality does preclude the existence of a linear factor over $\QQ$, though.

\begin{rem}
	For a group $G$ we let $s(G)$ be the minimal number of proper subgroups of $G$ having the property that the union of their conjugates cover $G$ and intersect trivially.
	Further, we let $r(G)$ be the minimal number of irreducible factors of an intersective polynomial with Galois group equal to $G$ over $\QQ$.
	Then \cref{coverExcep} implies that $s(G) \leq r(G)$.
	In~\cite{SonnBubboloni}, Bubboloni and Sonn showed that for $G = S_n$ one has $r(G) = s(G)$ or $r(G) = s(G)+1$ for any $n$.
	Further it was shown that $r(G) = s(G)$ for odd $n$ and for some even values of $n$.
	For $G = S_n$ or $G = A_n$ the following is known:
	\begin{itemize}
	    \item if $G = S_n$ and $s(G) = 2$, then $3 \leq n \leq 6$;
	    \item if $G = A_n$ and $s(G) = 2$, then $4 \leq n \leq 8$.
	\end{itemize}
	Rabayev and Sonn~\cite{SonnRabayev} showed that in any of the above cases $r(G) = 2$ by constructing explicit irreducible polynomials whose product has Galois group $G$.
\end{rem}

\section{Constructing exceptional polynomials}

\subsection{Constructing exceptional polynomials via Galois theory}
\label{sec:ConstructingExceptionalPolynomials:ViaGaloisTheory}

As a first application of \cref{coverExcep}, we show how any given Galois extension $L/\QQ$ with non-cyclic Galois group $G$ may be used to construct an exceptional polynomial.
Indeed, since $G$ is non-cyclic, every element of $G$ is contained in a proper subgroup of $G$.
Thus, we may cover $G$ with proper non-trivial subgroups:
\[
	G = \bigcup_{i=1}^{n} H_i.
\]
By Galois correspondence, $H_i$ may be written as $H_i = \Gal(L/K_i)$ for some intermediate field $K_i$ of the extension $L/\QQ$.
The primitive element theorem guarantees that $K_i$ is simple, that is, $K_i = \QQ(\alpha_i)$ for some $\alpha_i \in K_i$.
We let $f_i$ be the minimal polynomial of $\alpha_i$ over $\QQ$ and denote by $\tilde{K}_i$ the splitting field of $f_i$ in $L$.
Then the splitting field $M$ of $f = f_1 f_2 \cdots f_n$ is 
\[
	M = \tilde{K}_1 \tilde{K}_2 \cdots \tilde{K}_n \subseteq L.
\]
Consider the projection
\[
	\Gal(L/\QQ) \twoheadrightarrow \Gal(M/\QQ), \quad
	\psi \mapsto \psi\rvert_{M}.
\]
As every element of $G$ fixes one of the $\alpha_i$, we see that any element of $\Gal(f,\QQ)$ fixes a root of $f$, showing that $f$ is exceptional.
\begin{rem}
	As $G$ is assumed non-cyclic, one may choose a denser cover $(H_{i})_{I}$ of subgroups of $G$ such that the intersection of the conjugates of the $H_{i}$ is trivial.
	In that way, the splitting field of $f$ actually becomes equal to $L$ such that the Galois group of $f$ is equal to $G$.
	This shows that any non-cyclic group can be realised as the Galois group of an exceptional polynomial.
	Applying stronger tools, Sonn used such a cover in~\cite{Sonn1} to prove that any solvable non-cyclic group can be realised as the Galois group of an intersective polynomial.
\end{rem}

\subsection{Exceptional polynomials with quadratic factor}
\label{sec:CharacterisationWithQuadraticFactor}

In view of~\cref{cor:ExceptionalPolynomialsAreReducible}, one may try to construct exceptional polynomials with \emph{few} irreducible factors.
The guiding principle for the present section may be phrased as follows:
\begin{question}\label{q:ForceExceptionality}
	Given an irreducible monic polynomial $h\in\ZZ[X]$ without a root in the integers.
	Does there exist some irreducible polynomial $g\in\ZZ[X]$ such that $f = gh$ is exceptional?
\end{question}
We are able to settle this question under the additional restriction that $g$ be \emph{quadratic}.
Indeed, we have the following full characterisation:
\begin{thm}\label{quadPairTheo}
	Let $h$ be an irreducible monic integer polynomial and let $G$ denote its Galois group.
	Then the following statements are equivalent:
	\begin{enumerate}
		\item\label{enum:QuadraticFactorCreatesExceptionality}
		There is an irreducible monic quadratic polynomial $g\in\ZZ[X]$ such that the product $gh$ has a root modulo all but finitely many primes.
		\item\label{enum:SpecialSubgroupExistence}
		There is a subgroup $H$ of $G$ of index two such that every element in the coset $G \setminus H$ has a \emph{unique} fixed point, i.e., fixing a unique root of $h$.
	\end{enumerate}
\end{thm}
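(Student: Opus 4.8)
The plan is to reduce both implications via \cref{coverExcep} and the Galois correspondence. Write $L = \Split(h, \QQ)$ and $G = \Gal(L/\QQ)$, and regard $G$ as a permutation group of the root set $\Omega = \set{\beta_1, \dots, \beta_d}$ of $h$; this action is transitive (since $h$ is irreducible) and faithful, and $d = \deg h \geq 2$ because $h$ has no integer root. As in~\cref{eq:Hi:Def}, let $H_i = \Gal(L/\QQ(\beta_i))$ be the stabiliser of $\beta_i$. The starting point is that whenever $g \in \ZZ[X]$ is an irreducible monic quadratic with $g \neq h$, the product $gh$ is square free with no integer root (neither factor being linear), so by \cref{coverExcep} it is exceptional---equivalently, has a root modulo all but finitely many primes---exactly when the stabilisers of the roots of $gh$, taken inside $\Gal(\Split(gh,\QQ)/\QQ)$, cover that group.

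For \cref{enum:SpecialSubgroupExistence}~$\Rightarrow$~\cref{enum:QuadraticFactorCreatesExceptionality}: given $H$ as in~\cref{enum:SpecialSubgroupExistence}, we first note $d \geq 3$, since for $d = 2$ we would have $G \cong \ZZ/2\ZZ$ with trivial index-two subgroup, whence the transposition in $G \setminus H$ would fix no root. The fixed field $L^{H}$ is then a quadratic number field; let $g$ be the monic minimal polynomial over $\QQ$ of an algebraic integer generating $L^{H}$, an irreducible quadratic in $\ZZ[X]$ with $g \neq h$ (the degrees differ). Since $L^{H} \subseteq L$, we obtain $\Split(gh,\QQ) = L$ with Galois group $G$; the stabiliser of either root of $g$ equals $\Gal(L/L^{H}) = H$, and the stabiliser of $\beta_i$ equals $H_i$. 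As every $\sigma \in G \setminus H$ fixes some $\beta_i$, we have $G = H \cup \bigcup_i H_i$; that is, the stabilisers of the roots of $gh$ cover $G$, and the observation above yields that $gh$ is exceptional.

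For the converse we first pin down the relevant quadratic field. Let $g$ realise~\cref{enum:QuadraticFactorCreatesExceptionality}. Then $g \neq h$: otherwise $gh = h^2$ would have a root modulo $p$ precisely when $h$ does, which fails for a positive density of primes since the Galois group of the irreducible quadratic $h$ is $\ZZ/2\ZZ$ and its non-trivial element is a derangement (cf.\ the proof of \cref{cor:ExceptionalPolynomialsAreReducible}). Hence $gh$ is exceptional. Put $K = \Split(g,\QQ)$, a quadratic number field; then either $K \subseteq L$, or $K \cap L = \QQ$. In the latter case $\Gal(KL/\QQ) \cong G \times \ZZ/2\ZZ$, and under this isomorphism the stabiliser of either root of $g$ is $G \times \set{1}$ whereas that of $\beta_i$ is $H_i \times \ZZ/2\ZZ$; since $\Split(gh,\QQ) = KL$, \cref{coverExcep} would force every element $(\sigma,\tau)$ with $\tau$ non-trivial into some $H_i \times \ZZ/2\ZZ$, i.e.\ force every $\sigma \in G$ to fix a root of $h$---impossible for the non-trivial transitive group $G$, exactly as in the proof of \cref{cor:ExceptionalPolynomialsAreReducible}. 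Therefore $K \subseteq L$, and we set $H = \Gal(L/K)$, a subgroup of $G$ of index two.

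It remains to strengthen `fixes a root' to `fixes a unique root' for the elements of $G \setminus H$. Now $\Split(gh,\QQ) = L$ and the stabiliser of either root of $g$ is $H$, so exceptionality of $gh$ together with \cref{coverExcep} gives $G = H \cup \bigcup_i H_i$; in particular each $\sigma \in G \setminus H$ fixes at least one root of $h$. Writing $\chi(\sigma)$ for the number of roots of $h$ fixed by $\sigma$, Burnside's lemma applied to the transitive action of $G$ on $\Omega$ gives $\sum_{\sigma \in G}\chi(\sigma) = \abs{G}$, and applied to $H$ it gives $\sum_{\sigma \in H}\chi(\sigma) \geq \abs{H}$ (there being at least one $H$-orbit). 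Subtracting, $\sum_{\sigma \in G \setminus H}\chi(\sigma) \leq \abs{G} - \abs{H} = \abs{G \setminus H}$; since each of these $\abs{G \setminus H}$ summands is at least $1$, every one of them equals $1$, which is~\cref{enum:SpecialSubgroupExistence}. We expect this last counting step to be the main obstacle: that the elements of $G \setminus H$ have \emph{some} fixed point is immediate from \cref{coverExcep}, and it is precisely the two Burnside counts---over $G$ and over the index-two subgroup $H$---that promote this to \emph{uniqueness} by forcing that inequality to an equality.
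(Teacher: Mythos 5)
Your proposal is correct and follows essentially the same route as the paper's proof: show $K=\Split(g,\QQ)\subseteq L$ by ruling out $K\cap L=\QQ$ via a derangement/fixed-point-free element argument, set $H=\Gal(L/K)$, read off the covering condition from \cref{coverExcep}, and upgrade ``some fixed point'' to ``unique fixed point'' by the same double application of Burnside's lemma over $G$ and over $H$; the converse likewise takes $g$ to be a generator of the quadratic fixed field $L^{H}$. The only differences are cosmetic (explicitly excluding $g=h$, and phrasing the $K\cap L=\QQ$ case via the direct product $G\times\ZZ/2\ZZ$ rather than choosing explicit derangements $\psi_1,\psi_2$).
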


\begin{rem}\label{rem:Completing}
	It can be observed that `completing towards exceptionality' as asked for in \cref{q:ForceExceptionality} can easily be extended to `completing towards intersectivity'.
	Indeed, starting with an exceptional polynomial $f$ it is easy to use quadratic reciprocity and the Chinese remainder theorem to construct from the finitely many rational primes not belonging to $\mathscr{P}(f)$ an integer $d\equiv 1 \bmod 4$ such that $(X^2-d)f$ is intersective.
\end{rem}

The next result shows that a polynomial $h$ can only satisfy one of the equivalent conditions in \cref{quadPairTheo} if the degree of $h$ is odd.
\begin{prop}\label{nIsOdd}
	Let $h$ be an irreducible monic integer polynomial of degree $n$.
	If~\cref{enum:QuadraticFactorCreatesExceptionality} in \cref{quadPairTheo} holds, then $n$ is odd.
\end{prop}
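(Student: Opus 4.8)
The plan is to use the equivalence furnished by \cref{quadPairTheo}. First I would invoke the implication from \cref{enum:QuadraticFactorCreatesExceptionality} to \cref{enum:SpecialSubgroupExistence}, which provides a subgroup $H\leqslant G$ of index two such that every element of the coset $G\setminus H$ fixes a \emph{unique} root of $h$. Since $h$ is irreducible, $G$ acts transitively on the $n$ roots $\alpha_{1},\dots,\alpha_{n}$, so the claim ``$n$ is odd'' becomes a purely group-theoretic statement about a transitive subgroup of $S_{n}$ possessing such a special index-two subgroup.

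Next I would produce an element of $G\setminus H$ whose order is a power of two. Pick any $\tau\in G\setminus H$. The quotient map $G\twoheadrightarrow G/H\cong\ZZ/2\ZZ$ sends $\tau$ to the non-identity element, so $\operatorname{ord}(\tau)$ is even; write $\operatorname{ord}(\tau)=2^{a}m$ with $a\geqslant 1$ and $m$ odd, and set $\sigma\coloneqq\tau^{m}$. Then $\operatorname{ord}(\sigma)=2^{a}$, and, as $m$ is odd, $\sigma$ again maps to the non-identity element of $G/H$, so $\sigma\in G\setminus H$.

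Finally I would read off the cycle structure of $\sigma$. Regarded as an element of $S_{n}$, it satisfies $\sigma^{2^{a}}=\id$, so every cycle of $\sigma$ has length dividing $2^{a}$; in particular every cycle of length $\geqslant 2$ has even length. But \cref{enum:SpecialSubgroupExistence}, applied to $\sigma\in G\setminus H$, says that $\sigma$ fixes exactly one root of $h$, i.e.\ its cycle decomposition contains exactly one $1$-cycle. Hence $n$ equals $1$ plus a sum of even numbers, and is therefore odd.

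I do not expect a genuine obstacle here: the proof is short, and its entire content is the reduction in the middle paragraph, which is precisely what lets the ``unique fixed point'' hypothesis be played off against the divisibility of the cycle lengths of $\sigma$ by $2^{a}$. (For $n=1$ there is nothing to prove.)
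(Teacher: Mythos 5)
Your proposal is correct and follows essentially the same route as the paper: pass to $\sigma=\tau^{m}$ of $2$-power order in the nontrivial coset, observe that all its cycle lengths are powers of two, and play the unique fixed point against the parity of $n$. The only cosmetic difference is that the paper phrases this as a proof by contradiction rather than reading off $n=1+(\text{sum of even numbers})$ directly.
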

Next, we want to show that for all odd integers $n$, there is actually an irreducible monic integer polynomial $h$ of degree $n$ that satisfies the conditions of \cref{quadPairTheo}.
More precisely, we show that, for any integer $n$, the Dihedral group $D_n$ with $2n$ elements is realisable as the Galois group of an irreducible polynomial \emph{of degree $n$} and that, when $n$ is odd, all elements outside the unique subgroup of index two fix a root of that polynomial.
\begin{prop}\label{dihedralProp}
	Let $n \geq 3$ be a positive integer.
	There is an irreducible polynomial $h$ of degree $n$ such that $G\coloneqq \Gal(h, \QQ) \cong D_n$.
	Let $H$ be the unique subgroup of $G$ of index~$2$.
	If $n$ is odd, then every element of $G \setminus H$ fixes a root of $h$.
\end{prop}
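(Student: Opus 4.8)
The plan is to realise the dihedral group $D_n$ of order $2n$ as the Galois group of a number field $L/\QQ$, and then to take $h$ to be the minimal polynomial of a primitive element of the subfield of $L$ fixed by a reflection; the ``fixes a root'' statement will then fall out of the elementary description of the action of $D_n$ on the vertices of a regular $n$-gon. So suppose for the moment that we are given a Galois extension $L/\QQ$ with $G\coloneqq\Gal(L/\QQ)\cong D_n=\langle r,s\mid r^n=s^2=1,\ srs^{-1}=r^{-1}\rangle$. First I would let $U=\langle s\rangle$ be an order-two subgroup generated by a reflection, put $K=L^U$, and choose (primitive element theorem, after clearing denominators) an algebraic integer $\alpha$ with $K=\QQ(\alpha)$; then $h$, the minimal polynomial of $\alpha$ over $\QQ$, is monic, irreducible, and of degree $[K:\QQ]=[G:U]=n$. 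Next I would check that its splitting field is all of $L$: it equals the Galois closure of $K/\QQ$ inside $L$, namely $L^{C}$ with $C=\bigcap_{g\in G}gUg^{-1}$, and since $n\geq 3$ the reflection $s$ has at least two distinct conjugates in $D_n$, so the order-two subgroups $\{1,gsg^{-1}\}$ intersect trivially and $C=1$. This yields $\Gal(h,\QQ)=G\cong D_n$ and settles the first assertion for all $n\geq 3$.

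For the second assertion, assume $n$ is odd. By the Galois correspondence $G$ acts on the $n$ roots of $h$ as it acts on the coset space $G/U$; realising $D_n$ on $\ZZ/n\ZZ$ by letting $r$ act as $j\mapsto j+1$ and $s$ as $j\mapsto -j$, the stabiliser of $0$ is exactly $U=\langle s\rangle$, so the $G$-set of roots of $h$ is isomorphic to $\ZZ/n\ZZ$. I would then compute that the rotation $r^k$ fixes a point only for $k\equiv 0$, whereas the reflection $sr^k$ fixes the point $j$ precisely when $2j\equiv -k\pmod n$, which since $\gcd(2,n)=1$ has a unique solution; hence every reflection fixes exactly one root of $h$. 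Finally $[D_n,D_n]=\langle r^2\rangle=\langle r\rangle$ for odd $n$, so $H\coloneqq\langle r\rangle$ is the unique subgroup of index two, and $G\setminus H$ is precisely the set of the $n$ reflections. This completes the argument.

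The one ingredient still needed, and the step I expect to be the crux, is a $D_n$-extension of $\QQ$. I would obtain it from class field theory: pick an imaginary quadratic field $F=\QQ(\sqrt{-m})$ whose class group $\mathrm{Cl}(F)$ contains an element of order $n$ — that such $F$ exist for every $n$ is classical (Nagell, Ankeny--Chowla, and subsequent refinements) — and let $L$ be the subfield of the Hilbert class field $H_F$ attached to a chosen surjection $\mathrm{Cl}(F)\twoheadrightarrow C_n$, so $\Gal(L/F)\cong C_n$. The key point is that complex conjugation acts on $\mathrm{Cl}(F)$ by $[\mathfrak a]\mapsto[\mathfrak a]^{-1}$ (since $\mathfrak a\bar{\mathfrak a}$ is principal); as inversion fixes every subgroup setwise, it fixes the kernel of our surjection, so $L/\QQ$ is Galois, and complex conjugation restricts to an order-two element of $\Gal(L/\QQ)$ that is nontrivial on $F$, hence forms a complement to $\Gal(L/F)\cong C_n$ on which it acts by inversion (equivariance of the Artin map). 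Therefore $\Gal(L/\QQ)\cong C_n\rtimes C_2=D_n$, as required. (Alternatively one may simply invoke the classical realisability of $D_n$ over $\QQ$ and bypass this construction.) Outside this last paragraph the proof is routine Galois-theoretic bookkeeping; the real work lies in the existence of $F$ and in verifying the dihedral structure.
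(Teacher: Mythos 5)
Your proof is correct, and its Galois-theoretic skeleton coincides with the paper's: both take a $D_n$-extension $L/\QQ$, pass to the degree-$n$ subfield fixed by an order-two (reflection) subgroup, let $h$ be the minimal polynomial of a primitive element, verify that the splitting field of $h$ is all of $L$ (you via triviality of the normal core of $\langle s\rangle$, the paper via non-normality of that subgroup --- the same fact in different clothing), and finally note that $G\setminus H$ consists exactly of the reflections. The genuine differences lie in two inputs. For the existence of the $D_n$-extension the paper simply invokes Shafarevich's theorem (as $D_n$ is solvable), whereas you construct one inside the Hilbert class field of an imaginary quadratic field whose class number is divisible by $n$; your route is heavier (it needs Nagell-type existence results for class groups and the equivariance of the Artin map under complex conjugation) but explicit and self-contained, while the paper's citation is shorter though it wields a far stronger tool than necessary --- your parenthetical remark that one could instead just cite realisability of $D_n$ is essentially what the paper does. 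For the fixed-point claim the paper argues abstractly that an element of $G\setminus H$ has order $2$, hence decomposes into $1$- and $2$-cycles, and odd $n$ forces at least one $1$-cycle; you instead identify the root set with $\ZZ/n\ZZ$ carrying the standard dihedral action and solve $2j\equiv -k\pmod n$. Your computation buys slightly more, namely the \emph{uniqueness} of the fixed point, which is the form of the condition appearing in \cref{enum:SpecialSubgroupExistence} of \cref{quadPairTheo} (though uniqueness is not needed for the direction of that theorem applied here, and in any case follows automatically from the Burnside count in its proof).
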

Combining \cref{quadPairTheo} and \cref{dihedralProp} we easily see that, for every odd integer $n \geq 3$, there is an irreducible monic integer polynomial $h$ of degree $n$ such that there is an irreducible monic quadratic polynomial $g$ for which $gh$ is exceptional.

\begin{cor}\label{dihedralSol}
	For any $n\geq 2$ there exists a monic irreducible non-exceptional polynomial $h\in\ZZ[X]$ of degree $n$ with the following property: $h$ can be completed to an exceptional polynomial $gh$ by multiplying by a quadratic polynomial $g$ if and only if $n$ is odd.
\end{cor}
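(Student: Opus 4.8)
The plan is to read \cref{dihedralSol} off from \cref{quadPairTheo}, \cref{nIsOdd} and \cref{dihedralProp}, treating the parities of $n$ separately. The following observation makes both directions largely bookkeeping: an irreducible monic $h\in\ZZ[X]$ of degree $n\ge 2$ has no rational root, hence no integer root, and is non-exceptional by \cref{cor:ExceptionalPolynomialsAreReducible}; moreover a monic quadratic $g$ that completes such an $h$ to an exceptional $gh$ is necessarily irreducible (a reducible monic quadratic in $\ZZ[X]$ has an integer root, which $gh$ would inherit), and for irreducible monic quadratic $g$ the products $gh$ with $\mathscr{P}(gh)$ cofinite are exactly the exceptional ones. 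Hence it suffices to exhibit, for each $n\ge 2$, an irreducible monic $h\in\ZZ[X]$ of degree $n$ for which \cref{enum:QuadraticFactorCreatesExceptionality} in \cref{quadPairTheo} holds if and only if $n$ is odd.

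For odd $n$ (so $n\ge 3$) I would take the polynomial $h$ furnished by \cref{dihedralProp}, which we may assume monic with integer coefficients, with $G=\Gal(h,\QQ)\cong D_n$ acting on the roots $\alpha_1,\dots,\alpha_n$; let $H\le G$ be the unique index-two subgroup, namely the rotation subgroup. By \cref{dihedralProp} every element of $G\setminus H$ fixes \emph{some} root, and the one extra input needed to invoke \cref{quadPairTheo} is that it fixes a \emph{unique} root. This I would argue as follows: each stabiliser $H_i=\Gal(L/\QQ(\alpha_i))$ has order $\abs{G}/n=2$, and since $n$ is odd the rotation subgroup has odd order, so it contains no involution; consequently every $H_i$ is generated by a reflection, and no non-identity rotation fixes a root. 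Applying Burnside's orbit-counting lemma to the transitive $G$-action on the roots gives
\[
	\sum_{\sigma\in G}\#\set{ i }[ \sigma(\alpha_i)=\alpha_i ] = \abs{G} = 2n ;
\]
the identity contributes $n$ and the $n-1$ non-identity rotations contribute $0$, so the $n$ reflections together fix $n$ roots, whence each fixes exactly one. Thus \cref{enum:SpecialSubgroupExistence} in \cref{quadPairTheo} holds with this $H$, so \cref{enum:QuadraticFactorCreatesExceptionality} holds, and by the opening observation the resulting product $gh$ is exceptional.

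For even $n$ I would simply take $h=X^n-2$, irreducible by Eisenstein's criterion at $2$ (and non-exceptional by \cref{cor:ExceptionalPolynomialsAreReducible}): \cref{nIsOdd} shows that \cref{enum:QuadraticFactorCreatesExceptionality} in \cref{quadPairTheo} fails for every irreducible $h$ of even degree, so this $h$ admits no quadratic completion to an exceptional polynomial, and the asserted equivalence holds vacuously. Putting the two cases together proves the corollary. The one step that is not pure assembly of earlier results is the upgrade from ``fixes a root'' to ``fixes a unique root'' in the odd case, and that is where I would expect the (minor) difficulty to lie.
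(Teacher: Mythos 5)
Your proof is correct and follows essentially the same route as the paper: \cref{nIsOdd} disposes of even $n$ (the paper does not even need a specific $h$ there), and for odd $n$ the dihedral polynomial of \cref{dihedralProp} is fed into \cref{quadPairTheo}. The only real difference is that you explicitly upgrade ``fixes a root'' to ``fixes a unique root'' before invoking \cref{enum:SpecialSubgroupExistence} of \cref{quadPairTheo} --- a detail the paper leaves implicit, since its own Burnside computation inside the proof of \cref{quadPairTheo} shows that for an index-two subgroup of a transitive group ``at least one fixed point'' forces ``exactly one'' --- and your $D_n$-specific verification of this point is correct.
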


\subsection{Exceptional polynomials built from \texorpdfstring{$X^p-b$}{Xp-b} factors}
\label{sec:BuildingWithCyclotomicFactors}

We turn our attention to considering exceptional polynomials where all factors are of the form $X^p - b$ for a fixed prime $p$ and a square-free integer $b$.
By Eisenstein's irreducibility criterion, these polynomials are irreducible in $\QQ[X]$.

For a finite set $\mathscr{L}$ of rational primes, consider the set
\[
	\mathscr{B}_{\mathscr{L}} = \set[\Big]{ \prod_{\ell \in \mathscr{P}} \ell }[
		\emptyset \neq \mathscr{P} \subseteq \mathscr{L},\,
		\mathscr{P} = \mathscr{L}\cap[\min\mathscr{P},\max\mathscr{P}]
	]
\]
consisting of all square-free integers $>1$ whose set of prime factors is a subset of consecutive elements of $\mathscr{L}$.
Finally, let $f_{\mathscr{L}}$ be the polynomial given by
\[
    f_{\mathscr{L}} = \prod_{b \in \mathscr{B}_{\mathscr{L}}} (X^p - b).
\]
\begin{thm}\label{exceptionalPower}
	Let $\mathscr{L}$ be a set of primes, and let the notation be as above.
	Then the polynomial $f_{\mathscr{L}}$ is exceptional if and only if $\abs{\mathscr{L}} \geq p$.
\end{thm}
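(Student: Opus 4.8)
The plan is to run the Galois-theoretic criterion of \cref{coverExcep}. Write $\mathscr{L} = \set{\ell_1 < \dots < \ell_k}$; then $\mathscr{B}_{\mathscr{L}} = \set{ \ell_i\ell_{i+1}\cdots\ell_j }[ 1\leq i\leq j\leq k ]$, and to each $b\in\mathscr{B}_{\mathscr{L}}$ I attach the vector $\varepsilon(b)\in\set{0,1}^k$ recording which of the $\ell_i$ divide $b$, so that $\varepsilon(b)$ is the indicator of an ``interval'' $\set{i,i+1,\dots,j}$ of $\set{1,\dots,k}$. Since each $b$ is square free and $>1$ it is not a perfect $p$-th power, so $f_{\mathscr{L}}$ has no integer root; as $f_{\mathscr{L}}$ is visibly square free, \cref{coverExcep} reduces the problem to deciding when every element of $G = \Gal(f_{\mathscr{L}},\QQ)$ fixes a root of $f_{\mathscr{L}}$.

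The next step is to describe $L\coloneqq\Split(f_{\mathscr{L}},\QQ)$ and $G$. The roots of $X^p-b$ are the $\zeta_p^t b^{1/p}$ ($0\leq t<p$), for a fixed primitive $p$-th root of unity $\zeta_p$ and a fixed real root $b^{1/p}$; since $\mathscr{B}_{\mathscr{L}}$ contains $\ell_1,\ell_1\ell_2,\dots,\ell_1\ell_2\cdots\ell_k$, it generates $\langle\ell_1,\dots,\ell_k\rangle$ inside $\QQ^\times/(\QQ^\times)^p$, whence $L = \QQ(\zeta_p,\ell_1^{1/p},\dots,\ell_k^{1/p})$. A norm computation (the norm map $N_{\QQ(\zeta_p)/\QQ}$ sends $\prod_i\ell_i^{e_i}$ to $\prod_i\ell_i^{(p-1)e_i}$, and $\gcd(p,p-1)=1$) shows the $\ell_i$ remain independent in $\QQ(\zeta_p)^\times/(\QQ(\zeta_p)^\times)^p$, so $[L:\QQ(\zeta_p)] = p^k$ and hence $\abs{G} = [\QQ(\zeta_p):\QQ]\cdot p^k = (p-1)p^k$. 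It follows that the map sending $\sigma\in G$ to the pair $(c_\sigma,w_\sigma)\in(\ZZ/p\ZZ)^\times\times(\ZZ/p\ZZ)^k$ defined by $\sigma(\zeta_p)=\zeta_p^{c_\sigma}$ and $\sigma(\ell_i^{1/p})=\zeta_p^{w_{\sigma,i}}\ell_i^{1/p}$ is injective — if two elements agree, their quotient fixes $\zeta_p$ and every $\ell_i^{1/p}$, hence is trivial — and therefore bijective by the cardinality count; in particular every $w\in(\ZZ/p\ZZ)^k$ occurs as some $w_\sigma$ with $c_\sigma = 1$.

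Now compute the action on a root. For $b = \prod_i\ell_i^{\varepsilon(b)_i}$ one gets $\sigma(\zeta_p^t b^{1/p}) = \zeta_p^{\,c_\sigma t + \langle\varepsilon(b),w_\sigma\rangle}b^{1/p}$, so $\sigma$ fixes $\zeta_p^t b^{1/p}$ precisely when $(c_\sigma-1)t \equiv -\langle\varepsilon(b),w_\sigma\rangle\pmod p$. If $c_\sigma\neq 1$ this congruence has a solution $t$ for every $b$, so such $\sigma$ automatically fixes a root; and if $c_\sigma = 1$ then $\sigma$ fixes a root if and only if $\langle\varepsilon(b),w_\sigma\rangle\equiv 0\pmod p$ for some $b\in\mathscr{B}_{\mathscr{L}}$, i.e.\ some block $v_i+v_{i+1}+\dots+v_j$ of consecutive entries of $v\coloneqq w_\sigma$ vanishes modulo $p$. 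Combining this with the surjectivity from the previous step and \cref{coverExcep}, $f_{\mathscr{L}}$ is exceptional if and only if every $v\in(\ZZ/p\ZZ)^k$ has a block of consecutive entries summing to $0$ modulo $p$.

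Finally I would dispatch this combinatorial statement via the partial sums $S_0 = 0$, $S_t = v_1+\dots+v_t$: the block $v_i+\dots+v_j$ vanishes modulo $p$ exactly when $S_{i-1} = S_j$. If $k\geq p$, then $S_0,\dots,S_k$ are $k+1 > p$ elements of $\ZZ/p\ZZ$ and must repeat, so $f_{\mathscr{L}}$ is exceptional; if $k\leq p-1$, then $v = (1,\dots,1)$ gives $S_0,\dots,S_k$ equal to $0,1,\dots,k$, which are pairwise distinct modulo $p$, so no block vanishes and $f_{\mathscr{L}}$ is not exceptional. I expect the main obstacle to be the degree/independence step of the second paragraph establishing $\abs{G} = (p-1)p^k$ (equivalently, the bijectivity of $\sigma\mapsto(c_\sigma,w_\sigma)$), for which one may also invoke standard facts on Kummer and radical extensions; everything afterwards is a short computation plus pigeonhole, with only trivial adjustments needed when $p=2$ (where $\zeta_p\in\QQ$, so $c_\sigma=1$ always and $G$ is purely the Kummer part) or when $\mathscr{L}$ is empty.
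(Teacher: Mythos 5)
Your proof is correct and follows essentially the same route as the paper: identify $\Gal(f_{\mathscr{L}},\QQ)$ with pairs $(c,w)\in(\ZZ/p\ZZ)^\times\times(\ZZ/p\ZZ)^k$ via a degree count, observe that fixing a root amounts to a consecutive block of $w$ summing to $0$ bmod $p$, and finish by pigeonhole on partial sums (the paper isolates this as a zero-sum lemma) together with the witness $w=(1,\dots,1)$ for $k\leq p-1$. The only cosmetic differences are that you justify $[L:\QQ(\zeta_p)]=p^k$ by a norm/Kummer argument where the paper cites Besicovitch, and that your case split on $c_\sigma\neq 1$ is unnecessary (the zero-sum argument already produces a real fixed root uniformly); neither affects correctness.
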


\begin{examples*}
	Given any three primes $q<r<s$, the following polynomials are exceptional:
	\begin{enumerate}
		\item \(\displaystyle
			\prod\nolimits_b (X^2 - b)
		\), where $b$ ranges over $\set{ q, r, qr }$;
		\item \(
			g = (X^2 - 2) (X^2 - 3) (X^2 - 6)
		\)
		from~\cref{examplePolys};
		\item \(\displaystyle
			\prod\nolimits_b (X^3 - b)
		\), where $b$ ranges over $\set{ q, r, s, qr, qs, rs, qrs }$;
		\item \(
			(X^3 - 2) (X^3 - 3) (X^3 - 5) (X^3 - 6) (X^3 - 15) (X^3 - 30)
		\).
	\end{enumerate}
\end{examples*}

\section{Proofs}
\label{sec:Proofs}

\subsection{Proofs of the results about quadratic factors}
Let $g$ and $h$ be polynomials with rational coefficients of degree $m$ and $n$, respectively, and let $f\coloneqq gh$.
Let $M\coloneqq \Split(f,\QQ) \subset \CC$, $K\coloneqq \Split(g,\QQ) \subset M$, and $L = \Split(h,\QQ) \subset M$ denote the splitting fields (in $\CC$) of $f$, $g$ and $h$, respectively.
Finally, let $N\coloneqq \Gal(M/\QQ)$, $P=\Gal(K/\QQ)$ and $G=\Gal(L/\QQ)$ be the Galois group of $f$, $g$ and $h$, respectively.

We have an embedding
\[
    \iota\colon N \hookrightarrow P \times G, \quad
    \psi \mapsto (\psi\rvert_{K}, \psi\rvert_{L})
\]
and the image of $N$ under $\iota$ consists of all pairs $(\psi_1,\psi_2) \in P \times G$ where $\psi_1\rvert_{K\cap L} = \psi_2\rvert_{K\cap L}$.
It follows, in particular, that $\iota$ is an isomorphism when $K \cap L = \QQ$.
\begin{proof}[Proof of \cref{quadPairTheo}]
	We keep the notation from the above discussion but assume now that $g$ has degree~$2$.
	Assume first that $h$ is an irreducible monic integer polynomial such that there is an irreducible monic quadratic $g$ such that $f\coloneqq gh$ is exceptional.
	
	We claim that $K \subset L$.
	If not, then $K \cap L = \QQ$ and $\iota$ is therefore an isomorphism.
	As both $g$ and $h$ are irreducible there are elements $\psi_1 \in P$ and $\psi_2 \in G$ such that $\psi_1$ does not fix a root of $g$ and $\psi_2$ does not fix a root of $h$.
	It follows that $\psi\coloneqq \iota^{-1}((\psi_1,\psi_2)) \in G$ does not fix a root of $f$, by \cref{coverExcep} contradicting that $f$ is exceptional.
	
	Thus, $K \subset L$ and $H\coloneqq \Gal(L/K)$ is a subgroup of $G$ of index~$2$.
	The preimage $\iota^{-1}(P \times H)$ corresponds exactly to the elements in $N = \Gal(M/\QQ)$ that fix a root of $g$, and, by \cref{coverExcep}, every element of $G \setminus H$ must therefore fix at least one root of $h$.
	
	As $h$ is irreducible, we get via an enumeration of the roots of $h$ a transitive action of $G$ on $\mathscr{X}\coloneqq \set{ 1,2,\dots , n }$.
	By Burnside's lemma we thereby find that
	\begin{align*}
		\abs{G} &
		= \abs{G} \mkern 1mu \abs{\mathscr{X} / G}
		= \sum_{g \in G} \abs{\mathscr{X}^{g}} 
		= \sum_{g \in G\setminus H} \abs{\mathscr{X}^{g}} + \sum_{g \in H} \abs{\mathscr{X}^{g}} \\ &
		\geq \abs{G \setminus H} + \abs{H} \mkern 1mu \abs{\mathscr{X} / H}
		\geq \parentheses[\big]{ \tfrac{1}{2} + \tfrac{1}{2} } \abs{G}
		= \abs{G},
	\end{align*}
	where the first inequality only can be an \emph{equality} when every element of $G \setminus H$ fixes a \emph{unique} root of $h$.
	
	For the converse, let $H \leqslant G$ be such a subgroup and let $\QQ \subset K \subset L$ be the corresponding intermediate field extension.
	We may write $K = \QQ(\sqrt{d})$ for some square-free integer $d$.
	Let 
	\[
		g\coloneqq X^2 - d.
	\]
	The Galois group $N$ of $gh$ under $\iota$ may then be described as the pairs of maps $(\psi_1,\psi_2) \in P \times G$ such that $\psi_1(\sqrt{d}) = \psi_2(\sqrt{d})$.
	For $\psi \in N$, write $\iota(\psi) \coloneqq (\psi_1,\psi_2)$.
	If $\psi(\sqrt{d}) = \sqrt{d}$, then $\psi_1$ fixes a root of $g$, and if $\psi(\sqrt{d})=-\sqrt{d}$, then $\psi_2 \in G \setminus H$.
	Therefore, by construction, $\psi_1$ fixes a root of $h$.
	Thus, all elements in $N$ have a fixed point, and, by \cref{coverExcep}, $gh$ has a root modulo all but finitely many primes, as desired.
\end{proof}
\begin{proof}[Proof of \cref{nIsOdd}]
	Let $h$ be an irreducible monic integer polynomial of degree $n$ such that there exists a quadratic, $g$, such that $gh$ is exceptional.
	Moreover, assume for the sake of a contradiction that $n$ is even.
	
	Let $G\coloneqq \Gal(L/\QQ)$ be the Galois group of $h$ over $\QQ$ where $L$ is the splitting field of $h$, and let $H \leqslant G$ be a subgroup of $G$ satisfying~\cref{enum:SpecialSubgroupExistence} in \cref{quadPairTheo}.
	By Galois theory, we must have $H = \Gal(L/\QQ(\sqrt{d}))$ for some square-free integer $d$.
	
	Consider any map $\psi \in G \setminus H$, which must satisfy $\psi(\sqrt{d}) = -\sqrt{d}$ and therefore have even order, say, $\operatorname{ord}(\psi) = 2^{k} m$ for some positive $k$ and some odd integer $m$.
	Then $\psi^m$ has order $2^{k}$ and the cycles of $\psi^m$ in $S_n$ must consequently all have length $2^{\ell}$ for some $\ell$.
	Furthermore, $\psi^m$ is in $G \setminus H$ and must therefore have a \emph{unique} cycle of length~$1$.
	However, as $n$ is even, and the cycles of $\psi^m$ correspond to a partition of $n$, there must be an even number of $1$-cycles in $\psi^m$, contradicting~\cref{enum:SpecialSubgroupExistence} in \cref{quadPairTheo}.
\end{proof}

\begin{proof}[Proof of \cref{dihedralProp}]
	We first show that for every $n \geq 3$ the Dihedral group $D_n$ with $2n$ elements can be realized as the Galois group of an irreducible polynomial of degree $n$.
	Let $g \in \QQ[X]$ be a polynomial with Galois group $G$ isomorphic to $D_n$.
	The existence of $g$ is ensured by Shafarevich's theorem, as $D_n$ is solvable for every $n \geq 3$.
	Let $L = \Split(g, \QQ) \subset \CC$ be the splitting field of $g$.
	Notice that $[L :\QQ] = 2n$ and $G = \Gal(L/\QQ)$.
	Let $K \subset L$ be an intermediate field such that $H \coloneqq  \Gal(L/K) \leqslant G$ is a subgroup of $G$ of order~$2$, meaning that $[K:\QQ] = n$.
	Let $\alpha$ be an integer in $L$ that is a primitive element of $K$, i.e., $K = \QQ(\alpha)$.
	Let $h = \operatorname{Irr}(\alpha, \QQ) \in \ZZ[X]$ be the minimal polynomial of $\alpha$ over $\QQ$, which is a polynomial of degree $n$.
	Let $G' = \Gal(h, \QQ)$ and $M = \Split(h, \QQ)$.
	We claim that $G' \cong D_n$.
	As $L$ is normal and $\alpha \in L$ we have $K \subset M \subset L$.
	It is easily seen that $H$ is not normal in $G$ and that the inclusion $K \subset M$ therefore is strict, implying that $M = L$.
	This shows that $D_n$ can be realized as the Galois group of an irreducible polynomial of degree $n$.
	
	Let $n \geq 3$ be an odd integer and let $h$ be a polynomial of degree $n$ with Galois group $G\coloneqq \Gal(h,\QQ) \cong D_n$.
	$G$ has a unique subgroup $H$ of index~$2$ and that subgroup is cyclic of order $n$.
	Furthermore, all elements in $G \setminus H$ have order~$2$.
	
	Fixing an enumeration of the roots of $h$ we again find an embedding
	\[
		\eta\colon G \hookrightarrow S_n
	\]
	via the obvious action of $G$ on the roots of $h$.
	Every element in $\eta(G \setminus H)$ has order~$2$, and hence must consist of $2$-cycles and $1$-cycles.
	However, as $n$ is odd, there must be at least one $1$-cycle in every element of $\eta(G \setminus H)$, meaning that every element of $G \setminus H$ fixes a root of $h$, as claimed.
\end{proof}

\begin{proof}[Proof of \cref{dihedralSol}]
	\cref{nIsOdd} implies that $h$ necessarily has odd degree.
	Conversely, if $n$ is odd, then the result follows immediately upon taking $h$ to be any irreducible polynomial of degree $n$ with Galois group isomorphic to $D_n$.
	By \cref{cor:ExceptionalPolynomialsAreReducible} such a polynomial is guaranteed to be non-exceptional and the result follows from \cref{dihedralProp}.
\end{proof}

The following observation was kindly pointed out to the second author by Joachim König (private communication).
\begin{rem}
	The results of this subsection can be generalized in the following way: let $p$ and $q$ be primes with $p \equiv 1 \bmod{q}$.
	Up to isomorphism, there is a unique non-trivial semi-direct product $G\coloneqq C_p \rtimes C_{q}$ of the cyclic groups of order $p$ and $q$.
	In a similar fashion as in the proof of \cref{dihedralProp}, one might show that $G$ can be realized as the Galois group of an irreducible polynomial $h$ of degree $p$ with splitting field, say, $L$.
	Choose a subfield $K \subset L$ with $[L:K] = p$.
	It follows that $\QQ \subset K$ is normal and thus, that $K$ is the splitting field of a polynomial $g$ of degree $q$.
	Putting $H = \Gal(L/K) \leqslant G$, it follows that $H$ is a subgroup of $G$ of order $p$, and, as in the proof of \cref{dihedralProp}, it can be seen that every element of $G\setminus H$ fixes a root of $h$ and therefore that $gh$ is exceptional.
	We conclude that, for any prime $q$, there is a prime $p$ such that there are irreducible monic integer polynomials $g$ and $h$ of degree $q$ and $p$, respectively, such that the product $gh$ is exceptional. 
\end{rem}

\subsection{Proof of \texorpdfstring{\cref{exceptionalPower}}{Theorem\autoref{exceptionalPower}}}
We keep the notation from the formulation of \cref{exceptionalPower}.
For the proof of \cref{exceptionalPower}, we shall check that every element of the Galois group of $f_{\mathscr{L}}$ fixes a root of $f_{\mathscr{L}}$.
If $b$ is some non-negative real number, we write $b^{1/p}$ for the (unique) positive real root of $X^p - b$.
The other roots of $X^p - b$ are then given by $\zeta_p^{\nu} b^{1/p}$ ($\nu=1,\ldots,p-1$), where $\zeta_p$ may be taken to be any primitive $p$-th root of unity.
It follows that the splitting field $L_{\mathscr{L}}$ of $f_{\mathscr{L}}$ is
\[
	L_{\mathscr{L}} = \QQ(\set{ \ell^{1/p} }[ \ell\in\mathscr{L} ] \cup \set{\zeta_p}).
\]
In~\cite[Theorem~2]{Besicovitch} it was shown that the $p$-th roots of different primes are independent, meaning that
\[
	[\QQ(\set{ \ell^{1/p} }[ \ell\in\mathscr{L} ]) : \QQ] = p^n.
\]
As
\(
	[\QQ(\zeta_p), \QQ] = p-1
\),
it follows that
\[
	[L_{\mathscr{L}}:\QQ] = p^n (p-1).
\]
Every element of $\psi \in G \coloneqq \Gal(L_{\mathscr{L}}/\QQ)$ is completely determined by its values on $\ell^{1/p}$ (for $\ell\in\mathscr{L}$) and $\zeta_p$.
Moreover, for every $\ell\in\mathscr{L}$ there is some $\nu(\ell)\in\set{0,\ldots,p-1}$ such that
\(
	\psi(\ell^{1/p}) = \zeta_p^{\nu(\ell)} \ell^{1/p}
\).
Similarly, $\psi(\zeta_p) = \nu(\zeta_p)$ for some $\nu(\ell)\in\set{1,\ldots,p-1}$.
Because of reasons of cardinality, it follows that mapping $\psi$ to the function $\nu$ just described gives rise to a bijection of sets
\begin{equation}\label{eq:GaloisGroupBijection}
	G
	\xrightarrow{~ 1:1 ~}
	\set{\text{maps } \nu\colon\mathscr{L}\cup\set{\zeta_p}\to\set{0,\ldots,p-1} \text{~such that~} \nu(\zeta_p) \neq 0 }.
\end{equation}

\medskip

We will use the following well-known elementary fact from zero-sum theory.
We include a proof for the reader's convenience.
\begin{lem}\label{zeroSum}
	Let $(g_i)_i$ be a finite sequence of not necessarily distinct elements in some abelian group $(G,+)$ with $n$ elements.
	If the sequence consists of at least $n$ terms, then it admits a subsequence of consecutive elements whose sum is zero in $G$.
\end{lem}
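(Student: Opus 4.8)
The plan is the classical pigeonhole argument on prefix sums. First I would observe that it suffices to treat the first $n$ terms of the sequence: any block of consecutive elements among $g_1,\dots,g_n$ with zero sum is in particular such a block for the whole sequence. So I relabel and assume the sequence is exactly $g_1,\dots,g_n$ (this is where the hypothesis of having at least $n$ terms enters).

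Next I would introduce the $n+1$ partial sums
\[
	s_0 \coloneqq 0, \qquad s_k \coloneqq g_1 + g_2 + \cdots + g_k \quad (1 \le k \le n),
\]
all of which are elements of $G$. Since $\abs{G}=n$ while there are $n+1$ partial sums, the pigeonhole principle forces $s_i = s_j$ for some pair of indices with $0 \le i < j \le n$.

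Finally I would use that $G$ is abelian, so the telescoping cancellation is unambiguous, and deduce from $s_i = s_j$ that
\[
	g_{i+1} + g_{i+2} + \cdots + g_j = s_j - s_i = 0.
\]
Because the inequality $i < j$ is strict, $g_{i+1},\dots,g_j$ is a \emph{non-empty} block of consecutive terms with zero sum, which is exactly the assertion of the lemma.

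I do not expect a genuine obstacle here; the argument is entirely elementary. The only points deserving a moment's attention are the index bookkeeping — insisting on $i < j$ so that the resulting block is genuinely non-empty — and noting that the hypothesis "at least $n$ terms'' is used precisely to produce $n+1$ partial sums, one more than $\abs{G}$, so that the pigeonhole collision is available.
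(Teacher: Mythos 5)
Your proof is correct and is essentially the same pigeonhole-on-prefix-sums argument as the paper's; the only cosmetic difference is that you include the empty partial sum $s_0=0$ to get $n+1$ pigeons at once, whereas the paper uses only $s_1,\dots,s_n$ and splits into the two cases ``all distinct (hence some $s_k=0$)'' versus ``two coincide''. Both versions are complete and correct.
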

\begin{proof}
	It suffices to prove the result for a sequence $g_1,\ldots,g_n$ of length~$n$.
	Let $s_k = \sum_{i=1}^k g_i$.
	Then either $\set{s_1,\ldots,s_n} = G \ni 0$ or we have $s_k = s_\ell$ for two integers $1\leq k < \ell \leq n$.
	In the former case, we are already done, but in the latter case, we are also done, for $0 = s_\ell - s_k = \sum_{i=k+1}^\ell g_i$.
\end{proof}

\begin{proof}[Proof of \cref{exceptionalPower}]
	For $p=2$ the result is immediate from the multiplicativity of the Legendre symbol.
	Therefore, we may assume that $p$ is an \emph{odd} prime.
	
	We first show that, for any set $\mathscr{L}$ of at least $p$ rational primes, the polynomial $f_{\mathscr{L}}$ is exceptional.
	To this end, let $\psi$ be an element of $G$ and let $\nu$ be the map corresponding to $\psi$ under~\cref{eq:GaloisGroupBijection}.
	By \cref{zeroSum}, there is a subset $\mathscr{L}' \subseteq \mathscr{L}$ of consecutive primes such that
	\[
		1 = \prod\nolimits_{\ell} \zeta_p^{\nu(\ell)},
	\]
	where the product is taken over all $\ell\in\mathscr{L}'$.
	This implies that
	\[
		\psi\parentheses[\Big]{ \prod\nolimits_{\ell} \ell^{1/p} }
		= \prod\nolimits_{\ell} \psi(\ell^{1/p})
		= \prod\nolimits_{\ell} \zeta_p^{\nu(\ell)} \ell^{1/p}
		= \parentheses[\Big]{ \prod\nolimits_{\ell} \zeta_p^{\nu(\ell)} } \prod\nolimits_{\ell} \ell^{1/p}
		= \prod\nolimits_{\ell} \ell^{1/p}.
	\]
	Hence, $\psi$ fixes a root of $f_{\mathscr{L}}$ and we deduce~\cref{eq:GaloisGroupAsUnion}.
	\cref{coverExcep} now shows that $f_{\mathscr{L}}$ is exceptional.
	
	Now, let $\mathscr{L}$ be a set consisting of exactly $p-1$ primes.
	Consider the map $\psi\in G$ which fixes $\zeta_p$ and maps $\ell^{1/p}$ to $\zeta_p\ell^{1/p}$ for every $\ell\in\mathscr{L}$.
	Pick any root $\alpha$ of $f_{\mathscr{L}}$.
	Then, for some subset $\set{\ell_1,\ell_2, \dots, \ell_{r}} \subseteq \mathscr{L}$ and some $\nu_0$ we have $\alpha = \zeta_p^{\nu_0} (\ell_1 \cdots \ell_r)^{1/p}$.
	Therefore, $\alpha$ gets mapped by $\psi$ to $\zeta_p^r \alpha \neq \alpha$, implying that $\psi$ does not fix any root of $f_{\mathscr{L}}$ and, hence, that $f_{\mathscr{L}}$ is not exceptional.
\end{proof}
\begin{rem}
	In \cref{exceptionalPower} we have restricted ourselves to considering a very special class of polynomials.
	It is an interesting question to determine the minimum size of an arbitrary set $\mathscr{B}$ of square-free integers such that the polynomial
	\[
		f_{\mathscr{B}}(x) = \prod_{b\in\mathscr{B}} (X^{p}-b)
	\]
	is exceptional.
	\cref{exceptionalPower} yields such a set $\mathscr{B}$ with $\abs{\mathscr{B}} = \frac{1}{2} p(p+1)$.
	For $p=2$ and $p=3$ one can check that this is actually the minimum size, and we conjecture that this is indeed always the minimum.
	
	For an integer $n$, let $\mathscr{S}_n \subseteq \mathbb{F}_p[X_1,X_2,\ldots,X_n]$ denote the set
	\[
		\mathscr{S}_{n} \coloneqq \set[\Big]{ \sum_{X\in\mathscr{X}} X }[
			\emptyset \neq \mathscr{X} \subseteq \set{X_1,\ldots,X_n}
		].
	\]
	Call a subset $\mathscr{T}_{n} \subseteq \mathscr{S}_{n}$ \emph{good} if the polynomial
	\(
		\prod_{t\in\mathscr{T}_{n}} t \in \mathbb{F}_p[X_1,X_2,\ldots,X_n]
	\)
	vanishes on all points of $\mathbb{F}_{p}^{n}$.
	Arguing as in the proof of \cref{exceptionalPower}, it follows that the minimal size of a set of square-free integers $\mathscr{B}$ such that $f_{\mathscr{B}}$ is exceptional is
	\[
		\min_{n\in\mathbb{N}} \min\set{
			\abs{\mathscr{T}_{n}}
		}[
			\text{good subsets~} \mathscr{T}_{n}\subseteq\mathscr{S}_{n}
		].
	\]
	This translates the question of determining the minimum size of $\mathscr{B}$ into problem of combinatorial/geometric flavour.
\end{rem}


\vfill%
\end{document}